\definecolor{darkred}{RGB}{100,0,0}
\definecolor{darkgreen}{RGB}{0,100,0}
\definecolor{darkblue}{RGB}{0,0,150}
\definecolor{red}{RGB}{255,0,0}
\newtheorem{thm}{Theorem}
\newtheorem{prp}{Proposition}
\newtheorem{lem}{Lemma}
\def\beq{\begin{equation}}
\def\eeq{\end{equation}}
\def\beqn{\begin{eqnarray*}}
\def\eeqn{\end{eqnarray*}}
\def\bitem{\begin{itemize}}
\def\eitem{\end{itemize}}
\def\benum{\begin{enumerate}}
\def\eenum{\end{enumerate}}
\def\bmult{\begin{multline*}}
\def\emult{\end{multline*}}
\def\bcenter{\begin{center}}
\def\ecenter{\end{center}}
\newcommand{\thmref}[1]{Theorem~\ref{thm:#1}}
\newcommand{\secref}[1]{Section~\ref{sec:#1}}
\newcommand{\figref}[1]{Figure~\ref{fig:#1}}
\newcommand{\algref}[1]{Algorithm~\ref{alg:#1}}
\def\cN{\mathcal{N}}
\def\bA{\mathbf{A}}
\def\bB{\mathbf{B}}
\def\bU{\mathbf{U}}
\def\ba{\mathbf{a}}
\def\be{\mathbf{e}}
\def\bp{\mathbf{p}}
\def\bu{\mathbf{u}}
\def\bx{\mathbf{x}}
\def\by{\mathbf{y}}
\def\bz{\mathbf{z}}
\def\bbP{\mathbb{P}}
\def\bbQ{\mathbb{Q}}
\def\bbR{\mathbb{R}}
\newcommand{\E}{\operatorname{\mathbb{E}}}
\renewcommand{\P}{\operatorname{\mathbb{P}}}
\def\Bin{\operatorname{Bin}}
\newcommand{\<}{\langle}
\renewcommand{\>}{\rangle}
\newcommand{\inner}[2]{\langle #1, #2 \rangle}
\newcommand{\xhat}{\widehat{x}}
\newcommand{\bxhat}{\widehat{\bx}}
\newcommand{\Shat}{\widehat{S}}
\definecolor{eac}{RGB}{200,50,50}
\definecolor{mad}{RGB}{50,200,50}
\definecolor{ejc}{RGB}{50,50,200}
\numberwithin{equation}{section}
\begin{document}

\title{On the Fundamental Limits of Adaptive Sensing}
\author{
Ery Arias-Castro%
\footnote{Department of Mathematics, University of California, San Diego
\{\href{mailto:eariasca@ucsd.edu}{eariasca@ucsd.edu}\}}, \
Emmanuel J.~Cand\`es%
\footnote{Departments of Mathematics and Statistics, Stanford University
\{\href{mailto:candes@stanford.edu}{candes@stanford.edu}\}} \
and Mark A.~Davenport
\footnote{School of Electrical and Computer Engineering, Georgia Institute of Technology
\{\href{mailto:mdav@gatech.edu}{mdav@gatech.edu}\}}
}

\date{November 2011 (Revised August 2012)}
\maketitle

\begin{abstract}
  Suppose we can sequentially acquire arbitrary linear measurements of
  an $n$-dimensional vector $\bx$ resulting in the linear model
  $\by = \bA \bx + \bz$, where $\bz$ represents measurement noise.  If the
  signal is known to be sparse, one would expect the following folk
  theorem to be true: choosing an {\em adaptive} strategy which
  cleverly selects the next row of $\bA$ based on what has been
  previously observed should do far better than a {\em nonadaptive}
  strategy which sets the rows of $\bA$ ahead of time, thus not trying
  to learn anything about the signal in between observations. This paper shows that the folk
  theorem is false.  We prove that the advantages offered by clever
  adaptive strategies and sophisticated estimation procedures---no
  matter how intractable---over classical compressed
  acquisition/recovery schemes are, in general, minimal.
\end{abstract}

\medskip

{\bf Keywords:} sparse signal estimation, adaptive sensing, compressed
sensing, support recovery, information bounds, hypothesis tests.

\section{Introduction}
\label{sec:intro}

This paper is concerned with the fundamental question of how well one
can estimate a sparse vector from noisy linear measurements in the
general situation where one has the flexibility to design those
measurements at will (in the language of statistics, one would say that
there is nearly complete freedom in designing the experiment).  This question is of importance in a variety of sparse signal estimation or sparse regression scenarios, but perhaps arises most naturally in the context of compressive sensing (CS)
\cite{MR2236170,MR2300700,donoho-CS}. In a nutshell, CS asserts that it is possible to reliably acquire sparse signals from just a few linear measurements selected a
priori. More specifically, suppose we wish to acquire a sparse signal $\bx
\in \bbR^n$.  A possible CS acquisition protocol would proceed as follows. $(i)$ Pick an $m \times n$ random projection matrix $\bA$ (the first $m$ rows of a random unitary matrix) in advance, and collect data of
the form
\begin{equation}
\label{measure}
\by = \bA \bx + \bz,
\end{equation}
where $\bz$ is a vector of errors modeling the fact that any real
world measurement is subject to at least a small amount of
noise. $(ii)$ Recover the signal by solving an $\ell_1$ minimization
problem such as the Dantzig selector~\cite{dantzig} or the
LASSO~\cite{Tibshirani96}.  As is now well known, theoretical results
guarantee that such convex programs yield accurate solutions.  In
particular, when $\bz = {\boldsymbol 0}$, the recovery is exact, and the error
degrades gracefully as the noise level increases.

A remarkable feature of the CS acquisition protocol is that the
sensing is completely nonadaptive; that is to say, no effort
whatsoever is made to understand the signal. One simply selects a
collection $\{\ba_i\}$ of sensing vectors a priori (the rows of the
matrix $\bA$), and measures correlations between the signal and these
vectors. One then uses numerical optimization---e.g., linear
programming \cite{dantzig}---to tease out the sparse signal $\bx$ from
the data vector $\by$. While this may make sense when there is no
noise, this protocol might draw some severe skepticism in a noisy
environment. To see why, note that in the scenario above, most of the power
is actually spent measuring the signal at locations where there is no
information content, i.e., where the signal vanishes. Specifically, let $\ba$ be a
row of the matrix $\bA$ which, in the scheme discussed above, has
uniform distribution on the unit sphere. The dot product is
\[
\<\ba, \bx\> = \sum_{j=1}^n a_j x_j,
\]
and since most of the coordinates $x_j$ are zero, one might think
that most of the power is wasted. Another way to express all of this
is that by design, the sensing vectors are approximately orthogonal to
the signal, yielding measurements with low signal power or a poor signal-to-noise ratio (SNR).

The idea behind adaptive sensing is that one
should localize the sensing vectors around locations where the signal
is nonzero in order to increase the SNR, or
equivalently, not waste sensing power. In other words, one should try
to ``learn'' as much as possible about the signal while acquiring it in
order to design more effective subsequent measurements. Roughly
speaking, one would $(i)$ detect those entries which are nonzero or
significant, $(ii)$ progressively localize the sensing vectors on those
entries, and $(iii)$ estimate the signal from such localized linear
functionals.  This is akin to the game of 20 questions in
which the search is narrowed by formulating the next question in a way
that depends upon the answers to the previous ones.  Note that in
some applications, such as in the acquisition of wideband radio
frequency signals, aggressive adaptive sensing mechanisms may not be practical
because they would require near instantaneous feedback.  However, there do exist
applications where adaptive sensing is practical and where the potential benefits of
adaptivity are too tantalizing to ignore.

The formidable possibilities offered by adaptive sensing give rise to
the following natural ``folk theorem.''
\begin{quote}
{\bf Folk Theorem}. {\em The estimation error one can get by
  using a clever adaptive sensing scheme is far better than what is
  achievable by a nonadaptive scheme. }
\end{quote}
In other words, learning about the signal along the way and
adapting the questions (the next sensing vectors) to what has been
learned to date is bound to help. In stark contrast, the main result
of this paper is this:
\begin{quote} {\bf Surprise}. {\em The folk theorem is wrong in
    general.  No matter how clever the adaptive sensing mechanism, no
    matter how intractable the estimation procedure, in general it is
    not possible to achieve a fundamentally better mean-squared error
    (MSE) of estimation than that offered by a na\"{i}ve random
  projection followed by $\ell_1$ minimization.}
\end{quote}

The rest of this article is mostly devoted to making this
claim precise. In doing so, we shall also show that adaptivity does
not help in obtaining a fundamentally better estimate of the
signal support, which is of independent interest.

\subsection{Main result}

To formalize matters, we assume that the error vector $\bz$ in
\eqref{measure} has i.i.d.\ $\cN(0,\sigma^2)$ entries. Then if $\bA$
is a random projection with unit-norm rows as discussed above,
\cite{dantzig} shows that the Dantzig selector estimate $ \bxhat^{\rm
  DS}$ (obtained by solving a simple linear program) achieves an MSE
obeying
\begin{equation}
\label{nonadapt}
\frac1n \E \|\bxhat^{\rm DS} - \bx \|_2^2 \leq C \, \frac{k}{m} \, \log(n)  \, \sigma^2,
\end{equation}
where $C$ is some numerical constant.  The bound holds {\em
  universally} over all $k$-sparse signals\footnote{A signal is said
  to be $k$-{\em sparse} if it has at most $k$ nonzero components.  We
  also occasionally use the notation $\|\bx\|_0$ to denote the number
  of nonzero components of $\bx$.}  provided that the number of
measurements $m$ is sufficiently large (on the order of at least $k
\log (n/k)$).  Moreover, one can show that this result is essentially
optimal in the sense that {\em any} possible nonadaptive choice of
$\bA$ (with unit-norm rows) and {\em any} possible estimation
procedure $\bxhat$ will satisfy
\begin{equation}
\label{nonadapt-LB}
\frac1n \E \|\bxhat - \bx\|_2^2 \geq C' \, \frac{k}{m} \, \log(n/k) \, \sigma^2,
\end{equation}
where $C'$ is a numerical constant~\cite{candes-davenport}.  The
fundamental question is thus: {\em how much lower can the MSE be} when
$(i)$ we are allowed to sense the signal adaptively and $(ii)$ we can
use any estimation algorithm we like to recover $\bx$.

The distinction between adaptive and nonadaptive sensing can be
expressed in the following manner.  Begin by rewriting the statistical model
\eqref{measure} as
\begin{equation}
\label{measure2}
y_i = \<\ba_i, \bx\> + z_i, \quad i = 1, \dots, m,
\end{equation}
in which a power constraint imposes that each $\ba_i$ is of norm at most 1,
i.e., $\|\ba_i\|_2 \leq 1$; then in a nonadaptive sensing scheme the
vectors $\ba_1, \dots, \ba_m$ are chosen in advance and do not depend
on $\bx$ or $\bz$ whereas in an adaptive setting, the measurement vectors may
be chosen depending on the history of the sensing process, i.e.,
$\ba_i$ is a (possibly random) function of $(\ba_1, y_1, \dots,
\ba_{i-1}, y_{i-1})$.

If we follow the principle that ``you cannot get something for
nothing,'' one might argue that giving up the freedom to adaptively select the sensing vectors would result in a
far worse MSE. Our main contribution is to show that this is not the
case.
\begin{thm}
\label{teo:main-minmax}
Suppose that $k < n/2$ and let $m$ be an arbitrary number of
measurements.  Assume that $\bx$ is sampled with i.i.d.\ coordinates
such that $x_j = 0$ with probability $1-k/n$ and $x_j = \mu$ with
probability $k/n$ (so that we have $k$ nonzero entries on the
average). Then for $\mu = \frac{4}{3} \sqrt{\frac{n}{m}}$, any sensing
strategy and any estimate $\bxhat$ obey \beq \label{main-minmax}
\frac1n \E \|\bxhat - \bx\|_2^2 \ge \frac{4}{27} \, \frac{k}{m} \,
\sigma^2 > \frac{1}{7} \, \frac{k}{m} \, \sigma^2.  \eeq
\end{thm}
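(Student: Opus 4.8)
The plan is to pass to the Bayesian problem with the prescribed product prior and bound the Bayes risk coordinate by coordinate. Let $\cF$ be the $\sigma$-field generated by the whole sensing record $(\ba_1,y_1,\dots,\ba_m,y_m)$; every estimator is $\cF$-measurable, so $\tfrac1n\E\|\bxhat-\bx\|_2^2\ge\tfrac1n\sum_{j=1}^n\E[\Var(x_j\mid\cF)]$. Fix $j$. Since $x_j\in\{0,\mu\}$ with $\P(x_j=\mu)=\pi:=k/n$, write $Q_0^{(j)}$ and $Q_1^{(j)}$ for the laws of $\cF$ when $x_j=0$ and $x_j=\mu$ (the other coordinates $\bx_{-j}$ averaged against the prior), with densities $q_0,q_1$ against a common dominating measure. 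The posterior $\P(x_j=\mu\mid\cF)$ is an explicit Bayes ratio, and a direct computation gives the identity $\E[\Var(x_j\mid\cF)]=\mu^2\pi(1-\pi)\int\frac{q_0q_1}{\pi q_1+(1-\pi)q_0}$; bounding the integrand from below by $\min(q_0,q_1)$ turns this into $\E[\Var(x_j\mid\cF)]\ge\mu^2\pi(1-\pi)\bigl(1-d_{\mathrm{TV}}(Q_0^{(j)},Q_1^{(j)})\bigr)$.

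Next I would control $d_{\mathrm{TV}}(Q_0^{(j)},Q_1^{(j)})$ by Pinsker's inequality together with an exact evaluation of the Kullback--Leibler divergence that survives adaptivity. Reveal $\bx_{-j}$ to the statistician (a genie, which only shrinks the risk and hence only weakens the lower bound); conditioning on $\bx_{-j}=\bv$, the $i$-th observation becomes $\tilde y_i=a_{ij}x_j+z_i$. The crucial structural facts are that, conditionally on the past, the law of $\ba_i$ does not depend on $x_j$ (it is a function of the reconstructed history and of the strategy's own coins), whereas $\tilde y_i$ is $\cN(a_{ij}x_j,\sigma^2)$; so in the chain rule for relative entropy the sensing-vector terms vanish and only the Gaussian-location divergences $(a_{ij}\mu)^2/2\sigma^2$ between $\cN(0,\sigma^2)$ and $\cN(a_{ij}\mu,\sigma^2)$ remain, giving $D(P_{0,\bv}\|P_{1,\bv})=\frac{\mu^2}{2\sigma^2}\E_{P_{0,\bv}}[\sum_i a_{ij}^2]$ and symmetrically for $P_{1,\bv}$. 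Averaging over $\bv$, using joint convexity of KL, and taking the better of the two orientations of Pinsker yields $d_{\mathrm{TV}}(Q_0^{(j)},Q_1^{(j)})\le\frac{\mu}{2\sigma}\sqrt{e_j}$, where $e_j:=\min\bigl(\E[\sum_i a_{ij}^2\mid x_j=0],\ \E[\sum_i a_{ij}^2\mid x_j=\mu]\bigr)$.

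It remains to sum over $j$ and spend the power budget. Since a minimum is at most any convex combination, $e_j\le\E[\sum_i a_{ij}^2]$ (by the tower property), hence $\sum_j e_j\le\E[\sum_i\sum_j a_{ij}^2]=\E[\sum_i\|\ba_i\|_2^2]\le m$ because $\|\ba_i\|_2\le 1$. By Cauchy--Schwarz $\tfrac1n\sum_j\sqrt{e_j}\le\sqrt{m/n}$, so $\tfrac1n\sum_j d_{\mathrm{TV}}(Q_0^{(j)},Q_1^{(j)})\le\frac{\mu}{2\sigma}\sqrt{m/n}$. Combining the three displayed bounds, $\tfrac1n\E\|\bxhat-\bx\|_2^2\ge\mu^2\pi(1-\pi)\bigl(1-\tfrac{\mu}{2\sigma}\sqrt{m/n}\bigr)$. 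With $\mu=\tfrac43\sigma\sqrt{n/m}$ the bracket equals $\tfrac13$ and $\mu^2=\tfrac{16}{9}\tfrac nm\sigma^2$; since $k<n/2$ gives $\pi(1-\pi)>\tfrac12\cdot\tfrac kn$, the right-hand side is at least $\tfrac13\cdot\tfrac{16}{9}\cdot\tfrac12\cdot\tfrac km\sigma^2=\tfrac{8}{27}\tfrac km\sigma^2>\tfrac{4}{27}\tfrac km\sigma^2$, as claimed (the slack reflects that cruder bookkeeping still suffices).

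I expect the adaptive Kullback--Leibler identity to be the main obstacle: one must argue carefully that although $\ba_i$ is chosen as a function of $y_1,\dots,y_{i-1}$ and is therefore correlated with $x_j$, its \emph{conditional} law given the past is the same under both hypotheses, so that only the Gaussian-location part contributes to the divergence; the genie reduction to $\tilde y_i=a_{ij}x_j+z_i$ and the accounting that converts ``sensing energy placed on coordinate $j$'' into a quantity summing to at most $m$ are the places that need the most care. Everything else is either the standard Bayes-risk reduction or elementary inequalities.
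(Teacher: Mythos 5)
Your proposal is correct, and its technical core coincides with the paper's: the decisive step in both arguments is the observation that, even under adaptivity, conditioning on the remaining coordinates $\bx_{-j}$ and applying the chain rule makes the KL divergence between the two hypotheses on $x_j$ collapse to $\frac{\mu^2}{2\sigma^2}\E\bigl[\sum_i a_{ij}^2\bigr]$, after which convexity of KL, Pinsker, Cauchy--Schwarz over $j$, and the power budget $\sum_{i,j}a_{ij}^2\le m$ give $\frac1n\sum_j d_{\mathrm{TV}}\le\frac{\mu}{2}\sqrt{m/n}$ exactly as in the paper's proof of its support-recovery theorem. Where you genuinely diverge is in how this total-variation bound is converted into an MSE bound. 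The paper takes a detour through support recovery: it proves a $0$--$1$-loss Bayes-risk lemma ($B\ge\min(\pi_0,\pi_1)(1-\|\P_1-\P_0\|_{\mathrm{TV}})$), deduces a lower bound on the expected Hamming distance $\E|\Shat\Delta S|\ge k(1-\frac{\mu}{2}\sqrt{m/n})$, and then converts any estimator into a support estimator by thresholding at $\mu/2$, which costs a factor $\mu^2/4$ per misclassified coordinate. You instead bound the Bayes MSE directly by the summed posterior variances and use the identity $\E[\Var(x_j\mid\cF)]=\mu^2\pi(1-\pi)\int\frac{q_0q_1}{\pi q_1+(1-\pi)q_0}\ge\mu^2\pi(1-\pi)(1-d_{\mathrm{TV}})$, which I have checked and which is correct. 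Your route is slightly shorter and in fact sharper: it yields $\frac{8}{27}\frac{k}{m}\sigma^2$ rather than the paper's $\frac{4}{27}\frac{k}{m}\sigma^2$ (essentially because $\mu^2 p(1-p)$ beats $\frac{\mu^2}{4}\min(p,1-p)$ in the relevant regime). What it gives up is the explicit Hamming-distance lower bound on support recovery, which the paper states as a result of independent interest and reuses for the minimax Theorem~\ref{thm:second-minmax}. Two minor remarks: your ``genie'' framing is not really needed---what you actually use, as you correctly state, is joint convexity of KL over the mixture in $\bx_{-j}$, the same device as the paper's inequality \eqref{KL}; and your handling of randomized strategies via the conditional law of $\ba_i$ being hypothesis-independent is a clean substitute for the paper's derandomization step.
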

For any $n$ and $k$, the number of nonzero entries in a random vector
drawn from the Bernoulli prior is between $k \pm 3\sqrt{k}$ with
probability at least 99\%. With some additional arguments, and when
$n$ and $k$ are sufficiently large, we can actually show that the last
inequality in \eqref{main-minmax} holds true in a minimax sense when
$\bx$ is known to have a support size in that range.  In order to
avoid unnecessary technicalities, we prove a simpler result.
\begin{thm}
  \label{thm:second-minmax}
  For any $n \ge 2$ and $k < n/2$, and any $m$, \beq \label{at-most-k}
  \inf_{\bxhat} \, \sup_{\|\bx\|_0 \le k} \ \frac1n \E \|\bxhat -
  \bx\|_2^2 \ge C_k \, \frac{k}{m} \, \sigma^2, \eeq in which $\inf_{k
    \ge 1} C_k \ge 1/33$. For $k \ge 10$ we can take $C_k \ge 1/15$, and when $k$ is sufficiently large we can take $C_k = 1/7$.
\end{thm}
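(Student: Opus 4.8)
The plan is to deduce \eqref{at-most-k} from a Bayesian lower bound for a prior that is \emph{supported} on $\{\bx:\|\bx\|_0\le k\}$, so that the left-hand side of \eqref{at-most-k} is automatically at least the associated Bayes risk. The Bernoulli prior behind \eqref{main-minmax} is $k$-sparse only about half of the time and so cannot be used directly; the real content of the proof is to replace it by a genuinely $k$-sparse prior that is still hard, losing as little as possible in the constant.

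The prior I would use is a block construction. Partition $\{1,\dots,n\}$ into $k$ blocks $B_1,\dots,B_k$ of size (essentially) $n/k$, pick independent indices $J_j$ uniform on $B_j$, and set $\bx=\mu\sum_{j=1}^k \be_{J_j}$ where $\mu$ is the amplitude of Theorem~\ref{teo:main-minmax}. Then $\|\bx\|_0=k$ always, so $\bx$ is feasible for the supremum in \eqref{at-most-k}, and because the blocks are disjoint the error splits, $\E\|\bxhat-\bx\|_2^2=\sum_{j=1}^k\E\|\bxhat_{B_j}-\bx_{B_j}\|_2^2$. This reduces the problem to bounding below, for each block, the error in recovering a single amplitude-$\mu$ spike placed uniformly among $n/k$ coordinates.

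The decoupling across blocks is the step I expect to be the crux. Even though the spikes in different blocks are independent, an adaptive $\ba_i$ may be supported across several blocks and may depend on the entire history (hence on all the $J_\ell$). To isolate block $j$ I would reveal to the estimator an oracle giving $\bx$ on $B_j^c$; this can only decrease the Bayes risk of the $B_j$-component, and after subtracting the now-known term $\langle \ba_i|_{B_j^c},\bx_{B_j^c}\rangle$ from $y_i$ one is left with a bona fide single-spike localization problem, observed through the adaptive vectors $\ba_i|_{B_j}$ whose energies $b_{ij}=\|\ba_i|_{B_j}\|_2^2$ satisfy $\sum_j\sum_i b_{ij}=\sum_i\|\ba_i\|_2^2\le m$. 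The single-spike case of the argument behind \eqref{main-minmax}---its $k=1$ analysis, re-run for the ``one uniform spike'' prior in place of the Bernoulli prior---then gives that the normalized block-$j$ Bayes risk is at least $h_{n/k}(\beta_j)$, where $\beta_j=\sum_i b_{ij}$ is block $j$'s energy budget and $h_N$ is nonincreasing and convex in the budget with $h_{n/k}(m/k)\ge \tfrac4{27}\,\tfrac km\,\sigma^2$; note the amplitudes match because $\tfrac{n/k}{m/k}=\tfrac nm$, so the spike height prescribed for the block problem is exactly $\mu$. Averaging over $j$, using $\tfrac1k\sum_j\E\beta_j\le m/k$ together with Jensen and convexity (which makes the balanced allocation worst for the estimator), one gets $\tfrac1k\sum_j\E h_{n/k}(\beta_j)\ge h_{n/k}(m/k)$, hence $\tfrac1n\E\|\bxhat-\bx\|_2^2\ge \tfrac4{27}\,\tfrac km\,\sigma^2$ in the clean regime.

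What remains is bookkeeping and constant-chasing: rounding $n/k$ and $m/k$ to integers and absorbing the loss; justifying the convexity and monotonicity of $h_N$ in the budget and the passage to a total-energy rather than a number-of-measurements constraint; making the Jensen step legitimate for a randomly allocated budget; and, above all, tracking the single-spike constant honestly. The constant is weakest for small blocks, i.e.\ small $k$, where a block of size $n/k$ leaves little room for the localization/hypothesis-testing estimate and there is no averaging over many spikes; this is why one obtains only $\inf_k C_k\ge 1/33$ in general, improving to $1/15$ once $k\ge 10$ and to $1/7$ as $k\to\infty$. (A cruder route that uses Theorem~\ref{teo:main-minmax} strictly as a black box is to take its Bernoulli$(k_0/n)$ prior with $k_0<k$, condition on $\{\|\bx\|_0\le k\}$, and---after clipping $\bxhat$ entrywise to $[0,\mu]$, which never increases the risk---control the contribution of the rare event $\{\|\bx\|_0>k\}$ by $\mu^2\,\P(\|\bx\|_0>k)$ via a binomial tail bound; optimizing over $k_0$ yields a bound of the same shape with poorer constants.)
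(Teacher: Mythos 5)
Your main route (a block prior: one spike of height $\mu$ placed uniformly in each of $k$ disjoint blocks of size $n/k$, followed by an oracle decoupling across blocks) is genuinely different from the paper's proof, and it is a legitimate strategy for producing a prior supported on $\{\|\bx\|_0\le k\}$. But it has a real gap at its crux. You invoke ``the $k=1$ analysis, re-run for the one-uniform-spike prior in place of the Bernoulli prior'' to claim a per-block bound with the constant $4/27$. The paper's Theorem~\ref{thm:bernoulli-support} does \emph{not} transfer to a uniform-spike prior: its key step writes $\P_{0,j}$ and $\P_{1,j}$ as mixtures over $\bx'$ \emph{with the same mixing weights} $\P(\bx')$ and then applies joint convexity of the KL divergence in the pair of arguments. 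That requires the coordinates of $\bx$ to be independent, which is exactly why the authors use the Bernoulli prior (they say so explicitly). Under a uniform single-spike prior on a block, conditioning on $x_j=0$ versus $x_j=\mu$ changes the law of the remaining coordinates, the mixture weights no longer match, and the convexity step fails. One can repair this (e.g., by comparing both hypotheses to the all-zero distribution via the triangle inequality for total variation), but that costs a constant factor, and you would not recover $4/27$ per block --- hence you have no argument that actually delivers $1/33$, $1/15$, or $1/7$. Your explanation of where those constants come from (``small blocks, i.e.\ small $k$'') is also backwards for your own construction ($k$ small means blocks of size $n/k$ are \emph{large}) and does not match their actual origin. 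The Jensen/budget-allocation step is fine in principle but is additional machinery whose hypotheses (convexity and monotonicity of the per-block risk in the energy budget, for adaptively and randomly allocated budgets) you would still need to verify.

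Ironically, the ``cruder route'' you relegate to a parenthesis is essentially the paper's actual proof: they take the Bernoulli prior with mean $\alpha k$ for $\alpha\in(0,1)$, condition on $\{|S|\le k\}$, and control the contribution of the complementary event with a binomial tail bound (Bennett's inequality), then optimize over $\alpha$; the only difference is that they perform the truncation at the level of the Hamming-distance bound (after first reducing to support estimates $\Shat$ with $|\Shat|<2k$) rather than by clipping $\bxhat$. That route is what produces the stated constants, with the $k=1$ case as the bottleneck because the binomial tail is then relatively heavy unless $\alpha$ is taken small. If you want a complete proof, develop that parenthetical carefully; if you want to pursue the block construction, the missing ingredient is a self-contained adaptive lower bound for localizing a single spike under a uniform prior, with an explicit constant.
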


In short, Theorems~\ref{teo:main-minmax} and~\ref{thm:second-minmax} say
that if one ignores a logarithmic factor, then {\em adaptive
  measurement schemes cannot (substantially) outperform nonadaptive
  strategies.} While seemingly counterintuitive, we find that
precisely the same sparse vectors which determine the minimax rate in
the nonadaptive setting are essentially so difficult to estimate that
by the time we have identified the support, we will have already
exhausted our measurement budget (i.e., we will have
  acquired all $m$ measurements).

Before moving on, we should clarify precisely what we mean by a {\em
    substantial} improvement. After all, the lower bound in
  Theorem~\ref{thm:second-minmax} does improve upon the nonadaptive
  bound in~\eqref{nonadapt-LB} by a factor of $\log(n/k)$.  Indeed, we will see in Section~\ref{sec:discussion} that at least in some very special cases (e.g., when $k=1$), this log factor can in fact be eliminated.   However,
  this is a relatively modest improvement compared to what one might
  hope to gain by exploiting adaptivity.  Specifically, consider a
  simple adaptive procedure that uses $m/2$ measurements to identify
  the support of $\bx$ and uses the remaining $m/2$ measurements to
  estimate the values of the nonzeros.  If such a scheme identifies
  the correct support, then it is easy to show that this procedure
  will yield an estimate satisfying
$$
\frac1n \E \|\bxhat - \bx\|_2^2 = \frac{2k}{n} \, \frac{k}{m} \, \sigma^2 .
$$
Thus, there seems to be room for reducing the error by a factor of $k/n$ beyond the $\log(n/k)$ factor.  Theorem~\ref{thm:second-minmax}, however, shows that this gain is not possible in general.

On the one hand, our main result states that one cannot universally
improve on bounds achievable via nonadaptive sensing
strategies. Indeed, we will see that there are natural classes of sparse signals for which,
even after applying the most clever sensing scheme and the most subtle
testing procedure, one would still not be sure about where the nonzeros
lie. This remains true even after having used up the entirety of our
measurement budget. On the other hand, our result does not say that adaptive
sensing {\em never} helps. In fact, there are many instances in which it
will.  For example, when some or most of the nonzero entries in $\bx$
are sufficiently large, they may be detected sufficiently early so
that one can ultimately get a far better MSE than what would be
obtained via a nonadaptive scheme, see Section \ref{sec:numerics} for
simple experiments in this direction and \secref{discussion} for further discussion.

\subsection{Connections with testing problems}

The arguments we develop to reach our conclusions are
quite intuitive, simple, and yet they seem different from the
classical Fano-type arguments for obtaining information-theoretic
lower bounds (see Section \ref{sec:fano} for a discussion of the
latter methods). Our approach involves proving a lower bound for the
Bayes risk under the prior from Theorem~\ref{teo:main-minmax}.
To obtain such a lower bound, we make a detour through
testing---multiple testing to be exact.
Our argument proceeds through two main steps:
\begin{itemize}
\item {\em Support recovery in Hamming distance.}  We consider the multiple
  testing problem of deciding which components of the signal are zero
  and which are not. We show that no matter which adaptive strategy
  and tests are used, the Hamming distance between the estimated and
  true supports is large. Put differently, the multiple testing
  problem is shown to be difficult. In passing, this establishes that
  adaptive schemes are not substantially better than nonadaptive
  schemes for support recovery.

\item {\em Estimation with mean-squared loss.}  Any estimator with a
  low MSE can be converted into an effective support
  estimator simply by selecting the largest coordinates or those above
  a certain threshold. Hence, a lower bound on the Hamming distance
  immediately gives a lower bound on the MSE.
\end{itemize}
The crux of our argument is thus to show that it is not possible to
choose sensing vectors adaptively in such a way that the support of
the signal may be estimated accurately.

\subsection{Differential entropies and Fano-type arguments}
\label{sec:fano}

Our approach is significantly different from classical methods for
getting lower bounds in decision and information theory.  Such methods
typically rely on Fano's inequality~\cite{MR2239987}, and
are all intimately related to methods in statistical decision theory
(see \cite{MR2724359,MR1462963}).  Before continuing, we would like to point out
that Fano-type arguments have been used successfully to obtain (often
sharp) lower bounds for some adaptive methods.  For example, the work
\cite{4494677} uses results from \cite{MR2724359} to establish a bound
on the minimax rate for binary classification (see the references
therein for additional literature on active learning).  Other examples
include the recent paper \cite{rigollet2010nonparametric}, which
derives lower bounds for bandit problems, and \cite{5394945} which
develops an information theoretic approach suitable for stochastic
optimization, a form of online learning, and gives bounds about the
convergence rate at which iterative convex optimization schemes
approach a solution.

Following the standard approaches in our setting leads to major obstacles that we would
like to briefly describe.  Our hope is that this will help the reader
to better appreciate our easy itinerary.  As usual, we start by
choosing a prior for $\bx$, which we take having zero mean.  Coming
from information theory, one would want to bound the mutual
information between $\bx$ (what we want to learn about) and $\by$ (the
information we have), for any measurement scheme $\ba_1, \dots,
\ba_m$.  Assuming a deterministic measurement scheme, by the chain
rule, we have
\begin{equation} \label{diff-entropy} I(\bx,\by) = h(\by) - h(\by \, |
  \, \bx) = \sum_{i = 1}^m h(y_i \, | y_{[i-1]}) - h(y_i \, |
  y_{[i-1]}, \bx),
\end{equation}
where $y_{[i]} := (y_1, \ldots, y_{i})$.  Since the history up to time
$i-1$ determines $\ba_i$, the conditional distribution of $y_i$ given
$y_{[i-1]}$ and $\bx$ is then normal with mean $\<\ba_i, \bx\>$ and
variance $\sigma^2$. Hence, $h(y_i \, |\, y_{[i-1]}, \bx) = \frac12
\log(2\pi e \sigma^2)$.
This is the easy term to handle---the challenging term
is $h(y_i \, |\, y_{[i-1]})$ and it is not clear how one should go
about finding a good upper bound. To see this, observe that
\[
\text{Var}(y_i \, | \, y_{[i-1]}) = \text{Var}(\<\ba_i, \bx\> \, | \,
y_{[i-1]}) + \sigma^2.
\]
A standard approach to bound $h(y_i \, |\, y_{[i-1]})$ is to write
\[
h(y_i \, | y_{[i-1]}) \le \frac12 \E \log \bigl(2\pi e
\text{Var}(\<\ba_i, \bx\> \, | \, y_{[i-1]}) + 2\pi
e \sigma^2\bigr),
\]
using the fact that the Gaussian distribution maximizes the entropy
among distributions with a given variance.  If we simplify the problem
by applying Jensen's inequality, we obtain
\beq \label{I} I(\bx,\by) \leq \sum_{i=1}^m \frac12
\log \bigl(\E \<\ba_i, \bx\>^2 / \sigma^2+ 1\bigr).
\eeq
The RHS needs
to be bounded uniformly over all choices of measurement schemes, which
is a daunting task given that $\ba_i$ is a function of
$y_{[i-1]}$ which is in turn a function of $\bx$.  We note however that the RHS {\em can} be bounded in the nonadaptive setting, which is the approach taken in~\cite{candes-davenport} to establish~\eqref{nonadapt-LB}. See also~\cite{raskutti2009minimax,5571873,verzelen2010minimax} for other asymptotic results in this direction.

We have presented the problem
in this form to help information theorists see the analogy with
the problem of understanding the role of feedback in a Gaussian
channel \cite{MR2239987}. Specifically, we can view the
inner products $\<\ba_i, \bx\>$ as inputs to a Gaussian channel where
we observe the output of the channel via feedback.  It is
well-known that feedback does not substantially increase the capacity
of a Gaussian channel, so one might expect this argument to be
relevant to our problem as well.  Crucially, however, in the case of a
Gaussian channel the user has full control over the channel
input---whereas in the absence of a priori knowledge of $\bx$, in our
problem we are much more restricted in our control over the ``channel
input'' $\< \ba_i, \bx \>$.

\subsection{Connections with other works}
\label{sec:connections}

A number of papers have studied the
advantages (or sometimes the lack thereof) offered by adaptive sensing in the setting where one has
{\em noiseless} data, see for example \cite{donoho-CS,NovakPower,indyk}
and references therein. Of course, it is well
known that one can uniquely determine a $k$-sparse vector from $2k$
linear nonadaptive noise-free measurements and, therefore, there is not
much to dwell on. The aforementioned works of course do not study such
a trivial problem. Rather, the point of view is that the signal is not
exactly sparse, only approximately sparse, and the question is thus
whether one can get a lower approximation error by employing an
adaptive scheme. Whereas we study a statistical problem, this is a
question in approximation theory. Consequently, the techniques and results of this line of research
have no bearing on our problem.

There is much research suggesting intelligent adaptive sensing
strategies in the presence of noise and we mention a few of these works.  In a setting closely
related to ours---that of detecting the locations of the nonzeros of a
sparse signal from noisy point samples (so that $m>n$)---\cite{haupt2009distilled} shows that by adaptively
allocating sensing resources one can significantly improve upon the
best nonadaptive schemes~\cite{dj04}.   Lower bounds for nonadaptive and adaptive methods in this context
were recently established in~\cite{malloy2011}, with the adaptive lower bounds established through the sequential probability ratio test (SPRT)~\cite{MR799155}.
Closer to
home,~\cite{haupt-adaptive,haupt-compressive} consider CS schemes (with
$m < n$) which perform sequential subset selection via the random
projections typical of CS, but which focus in on promising areas of
the signal. When the signal is $(i)$ very sparse $(ii)$ has sufficiently
large entries and $(iii)$ has constant dynamic range, the method in~\cite{haupt-compressive} is able to
remove a logarithmic factor from the MSE achieved by the Dantzig
selector
with (nonadaptive) i.i.d.\ Gaussian
measurements.
In a different direction,
\cite{4518814,4524050} suggest Bayesian approaches where the
measurement vectors are sequentially chosen so as to maximize the
conditional differential entropy of $y_i$ given $y_{[i-1]}$. Finally, another approach in~\cite{iwen} suggests a
bisection method based on repeated measurements for the detection of
1-sparse vectors, subsequently extended to $k$-sparse vectors via
hashing. None of these works, however, establish a lower bound on the MSE of the recovered signal.

\subsection{Content}

We prove all of our results in Section \ref{sec:main}, trying to give
as much insight as possible as to why adaptive methods are not much
more powerful than nonadaptive ones for detecting the support of a
sparse signal. We will also attempt to describe the regime in which
adaptivity might be helpful via simple numerical simulations in
Section \ref{sec:numerics}. These simulations show that adaptive algorithms are subject to a fundamental
phase transition phenomenon. Finally, we comment on
open problems and future research in Section
\ref{sec:discussion}.

\section{Limits of Adaptive Sensing Strategies}
\label{sec:main}

This section establishes nonasymptotic lower bounds for the estimation
of a sparse vector from adaptively selected noisy linear measurements.
To begin with, we remind ourselves that we collect possibly adaptive
measurements of the form \eqref{measure2} of an $n$-dimensional
signal $\bx$ where $\|\ba_i\|_2 \le 1$; from now on, we assume for simplicity and without loss
of generality that $\sigma = 1$.

In our analysis below, we denote the total-variation metric between any two probability
distributions $\bbP$ and $\bbQ$ by $\|\bbP - \bbQ\|_{\text{TV}}$, and their KL divergence
by $K(\bbP, \bbQ)$~\cite{MR2319879}. Our arguments
will make use of Pinsker's inequality, which relates these two
quantities via
\begin{equation}
  \label{pinsker} \|\bbP -
\bbQ\|_{\rm TV} \leq \sqrt{K(\bbQ, \bbP)/2}.
\end{equation}
We shall also use the convexity of the KL divergence, which states
that for $\lambda_i \ge 0$ and $\sum_i \lambda_i = 1$, we have
\begin{equation}
\label{eq:KLconvex}
K\Bigl(\sum_i \lambda_i \bbP_i, \sum_i \lambda_i \bbQ_i\Bigr) \leq \sum_i \lambda_i K(\bbP_i, \bbQ_i)
\end{equation}
in which $\{\bbP_i\}$ and $\{\bbQ_i\}$ are families of probability
distributions.

Before proceeding, we argue that when we are given a prior $\pi(\bx)$, we can restrict ourselves to deterministic measurement schemes in the sense that $\ba_1$ is a deterministic vector and, for $i \geq 2$, $\ba_i$ is a deterministic function of $y_{[i-1]} = (y_1, \dots, y_{i})$.  In the general case we have $\ba_i = F_i(y_{[i-1]}, U_i)$, where $F_i$ is a deterministic function and $U_i$ is random and independent of $y_{[i-1]}$ and $z_i$.  With $\mathbf{U} = (U_1, \ldots, U_m)$, it follows from the law of iterated expectation
\[
\E \|\bxhat - \bx\|^2 = \E \Bigl[ \E[\|\bxhat - \bx\|^2 | \mathbf{U}]
\Bigr]
\]
(the expectation in the left-hand side is taken over $\bx, \by$ and $\bU$) that there exists a fixed realization $\bu = (u_1, \ldots, u_m)$ obeying
\[
\E[\|\bxhat - \bx\|^2 | \mathbf{U} = \bu] \le \E \|\bxhat - \bx\|^2.
\]
Hence, we can construct an estimator based on a deterministic
measurement scheme which is as good as any based on a randomized
measurement scheme. Note that in a deterministic scheme, letting
$\bbP_\bx$ be the distribution of $y_{[i-1]}$ when the target vector
is $\bx$ and using the fact that $y_i$ is conditionally independent of
$y_{[i-1]}$ given $\ba_i$, we see that the likelihood factorizes as
\begin{equation}
 \label{Px}
\P_\bx(y_{[m]}) = \prod_{i=1}^m \P_\bx(y_i | \ba_i),
\end{equation}
which will be of use in our analysis below.

\subsection{The Bernoulli prior}
\label{sec:bernoulli}

We begin by studying the model in Theorem~\ref{teo:main-minmax} which
makes our argument most transparent. The proof of
Theorem~\ref{thm:second-minmax} essentially reduces to that of
Theorem~\ref{teo:main-minmax}.

In this model, we suppose that $\bx \in \bbR^n$ is sampled from a product prior: for each $j \in \{1, \ldots, n\}$,
\beq \label{bernoulli}
x_j = \begin{cases} 0 & \text{w.p. } 1-k/n,\\
\mu & \text{w.p. } k/n,
\end{cases}
\eeq
and the $x_j$'s are independent. In this model, $\bx$ has on
average $k$ nonzero entries, all with known positive amplitudes equal
to $\mu$. This model is easier to study than the related model in which
one selects $k$ coordinates uniformly at random and sets those to
$\mu$. The reason is that in this Bernoulli model, the independence
between the coordinates of $\bx$ brings welcomed simplifications, as
we shall see.

Our goal here is to establish a lower bound on the MSE
when $\bx$ is drawn from this prior.  We do this in two steps.  First,
we look at recovering the support of $\bx$, which is done via a
reduction to multiple testing.  Second, we show that a lower bound on
the error for support recovery implies a lower bound on the MSE,
leading to Theorem~\ref{teo:main-minmax}.

\subsubsection{Support recovery in Hamming distance}

We would like to understand how well we can estimate the support $S =
\{j : x_j \neq 0\}$ of $\bx$ from the data \eqref{measure2}, and shall
measure performance by means of the expected Hamming distance. Here,
the error of a procedure $\Shat$ for estimating the support $S$ is
defined as
\[
\E |\Shat \Delta S|  = \sum_{j=1}^n \P( \Shat_j \neq S_j )
\]
where $\Delta$ denotes the symmetric difference, $S_j = 1$ if $j \in S$ and equals zero otherwise, and similarly for $\Shat_j$. As we can see, this reduces our problem to a sequence of $n$ independent hypothesis tests.  We will obtain a lower bound on the number of errors among these tests by exploiting the following lemma.
\begin{lem} \label{lem:Bayes}
Consider the testing problem of deciding between $H_0 : \bx \sim \P_0$ and $H_1 : \bx \sim \P_1$, where $H_0$ and $H_1$ occur with prior probabilities $\pi_0$ and $\pi_1$ respectively.  Under the 0-1 loss, The Bayes risk $B$ obeys
$$
B \ge \min(\pi_0, \pi_1) \left(1 - \| \P_1 - \P_0 \|_{\mathrm{TV}} \right).
$$
\end{lem}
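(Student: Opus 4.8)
My plan is to reduce the statement to the classical characterization of the Bayes-optimal test as a likelihood-ratio test, and then estimate its risk directly. First I would fix a common dominating measure $\nu$ (for instance $\nu = \P_0 + \P_1$) and write $p_0 = d\P_0/d\nu$ and $p_1 = d\P_1/d\nu$. I would then describe a decision rule as a measurable function $\phi$ with values in $[0,1]$, where $\phi$ is interpreted as the probability of declaring $H_1$; allowing randomized rules is what makes the subsequent pointwise optimization clean. The $0$--$1$ Bayes risk of $\phi$ is
\[
R(\phi) = \pi_0 \int \phi \, d\P_0 + \pi_1 \int (1 - \phi)\, d\P_1 = \pi_1 + \int \phi \,\bigl(\pi_0 p_0 - \pi_1 p_1\bigr)\, d\nu ,
\]
and since the integrand is minimized pointwise by taking $\phi = 1$ exactly where $\pi_0 p_0 < \pi_1 p_1$, the Bayes risk is
\[
B = \int \min\bigl(\pi_0 p_0,\, \pi_1 p_1\bigr)\, d\nu .
\]

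Next I would lower bound this integral. Assume without loss of generality that $\pi_0 \le \pi_1$, so that $\min(\pi_0, \pi_1) = \pi_0$; since $\pi_1 p_1 \ge \pi_0 p_1$ pointwise, we get $\min(\pi_0 p_0, \pi_1 p_1) \ge \pi_0 \min(p_0, p_1)$ and hence $B \ge \pi_0 \int \min(p_0, p_1)\, d\nu$. Finally, the identity $\min(a,b) = \tfrac12(a+b) - \tfrac12 |a - b|$, together with $\int p_0 \, d\nu = \int p_1 \, d\nu = 1$ and the convention $\|\P_1 - \P_0\|_{\mathrm{TV}} = \tfrac12 \int |p_1 - p_0|\, d\nu$, yields $\int \min(p_0, p_1)\, d\nu = 1 - \|\P_1 - \P_0\|_{\mathrm{TV}}$. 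Combining these gives $B \ge \pi_0\bigl(1 - \|\P_1 - \P_0\|_{\mathrm{TV}}\bigr) = \min(\pi_0, \pi_1)\bigl(1 - \|\P_1 - \P_0\|_{\mathrm{TV}}\bigr)$, and the case $\pi_1 \le \pi_0$ follows by symmetry.

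There is no substantive obstacle in this argument; it is a short exercise in the geometry of the total-variation distance. The only points requiring a little care are: $(i)$ working with randomized tests so that the formula for $B$ as an integral of a pointwise minimum is justified without measure-theoretic fuss; and $(ii)$ making sure the normalization of $\|\cdot\|_{\mathrm{TV}}$ used here (half the $L^1$ distance of the densities) is the one fixed in the paper via Pinsker's inequality \eqref{pinsker}, so that the lemma dovetails with the later reduction of support recovery to $n$ independent binary tests.
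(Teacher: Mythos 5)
Your proof is correct and follows essentially the same route as the paper: identify the Bayes rule as the likelihood-ratio test, write the Bayes risk as the integral of the pointwise minimum $\int \min(\pi_0 p_0, \pi_1 p_1)\,d\nu$, and relate that quantity to the total-variation distance. The only (cosmetic) difference is in the last step, where the paper manipulates $B = \tfrac12 - \tfrac{\pi_0}{2}\E_0|\Lambda-1|$ and applies a triangle inequality to $|\pi_1\,\mathrm{d}\P_1 - \pi_0\,\mathrm{d}\P_0|$, whereas your pointwise bound $\min(\pi_0 p_0, \pi_1 p_1) \ge \min(\pi_0,\pi_1)\min(p_0,p_1)$ combined with the exact identity $\int\min(p_0,p_1)\,d\nu = 1 - \|\P_1-\P_0\|_{\mathrm{TV}}$ is a slightly more direct way to land on the same bound.
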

\begin{proof}
Assume without loss of generality that $\pi_1 \le \pi_0$. The test with minimum risk is the Bayes test rejecting $H_0$ if and only if
$$
\Lambda = \frac{\pi_1 \, \bbP_{1}(\bx)}{\pi_0 \, \P_{0}(\bx)} > 1;
$$
that is, if the adjusted likelihood ratio exceeds one; see~\cite[Pbm. 3.10]{TSH}. A simple calculation shows that the Bayes risk obeys
$$
B = \pi_0 \E_0 \left( \min(1,\Lambda) \right),
$$
where $\E_0$ denotes expectation under $\P_{0}$.  Using the fact that $\E_0 \Lambda = \pi_1/\pi_0$ together with
$$
\min(1,\Lambda) = \frac{1+\Lambda}{2} + \frac{|\Lambda-1|}{2},
$$
we obtain
\begin{equation} \label{eq:Bayes1}
B = \frac{1}{2} - \frac{\pi_0}{2} \E_0 | \Lambda-1 |.
\end{equation}
Finally,
\begin{align*}
  \pi_0 \E_0 |\Lambda-1| = \int | \pi_1 \text{d}\P_1 - \pi_0 \text{d}\P_0 | & \le \pi_1 \int |\text{d}\P_1 - \text{d}\P_0| + \pi_0 - \pi_1 \\
  & = 2\pi_1 \| \P_1 - \P_0 \|_{\mathrm{TV}} + \pi_0 - \pi_1,
\end{align*}
which when combined with~\eqref{eq:Bayes1} establishes the lemma.
\end{proof}

\begin{thm} \label{thm:bernoulli-support}
Suppose that $\bx$ is sampled according to the Bernoulli prior with $k \le n/2$, then any estimate $\Shat$ obeys
\begin{equation}
\label{eq:supportH}
\E |\Shat \Delta S| \geq k \Bigl(1 - \frac{\mu}{2} \sqrt{\frac{m}{n}}\Bigr).
\end{equation}
\end{thm}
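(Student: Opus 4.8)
Here is the plan. The idea is to split the expected Hamming error into $n$ binary testing risks and bound each from below with \lemref{Bayes}. By the reduction to deterministic schemes established above, I may assume throughout that $\ba_i$ is a deterministic function of $y_{[i-1]}$. Since $\E|\Shat\Delta S| = \sum_{j=1}^n \P(\Shat_j \neq S_j)$ and the $j$th coordinate of any support estimator is a function of $y_{[m]}$, the quantity $\P(\Shat_j \neq S_j)$ is at least the Bayes risk of testing $H_0 : x_j = 0$ against $H_1 : x_j = \mu$ from $y_{[m]}$, with prior weights $1-k/n$ and $k/n$ and with the other coordinates $\bx_{-j}$ integrated out according to the product Bernoulli prior. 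Writing $\bbP^{(j)}_0$ and $\bbP^{(j)}_1$ for the laws of $y_{[m]}$ under $H_0$ and $H_1$, \lemref{Bayes} (with $\min(\pi_0,\pi_1) = k/n$ since $k \le n/2$) gives $\P(\Shat_j \neq S_j) \ge \tfrac kn\bigl(1 - \|\bbP^{(j)}_1 - \bbP^{(j)}_0\|_{\mathrm{TV}}\bigr)$, hence
\[
\E|\Shat\Delta S| \ \ge\ k - \frac kn \sum_{j=1}^n \|\bbP^{(j)}_1 - \bbP^{(j)}_0\|_{\mathrm{TV}}.
\]
So it suffices to prove $\sum_{j=1}^n \|\bbP^{(j)}_1 - \bbP^{(j)}_0\|_{\mathrm{TV}} \le \tfrac\mu2 \sqrt{mn}$.

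The next step computes the KL divergences that feed Pinsker's inequality. Fix $j$ and a realization of $\bx_{-j}$, and set $\bx = (\bx_{-j}, 0)$ and $\bx' = (\bx_{-j}, \mu)$, which differ only in coordinate $j$. Using the likelihood factorization \eqref{Px}, the fact that the adaptive rule $y_{[i-1]} \mapsto \ba_i$ does not depend on the underlying signal, and that conditionally on $y_{[i-1]}$ the observation $y_i$ is Gaussian with the same variance under $\bbP_\bx$ and $\bbP_{\bx'}$ (the two means differing by $\mu (\ba_i)_j$), the cross term involving the centred residual $y_i - \langle \ba_i, \bx\rangle$ vanishes and one finds $K(\bbP_\bx, \bbP_{\bx'}) = \tfrac{\mu^2}{2}\,\E_{\bbP_\bx}[N_j]$, where $N_j := \sum_{i=1}^m (\ba_i)_j^2$; the same identity holds with $\bx$ and $\bx'$ interchanged. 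Averaging over $\bx_{-j}$ and using convexity of the KL divergence \eqref{eq:KLconvex} then gives $K(\bbP^{(j)}_0, \bbP^{(j)}_1) \le \tfrac{\mu^2}{2}\,\E[N_j \mid x_j = 0]$ and $K(\bbP^{(j)}_1, \bbP^{(j)}_0) \le \tfrac{\mu^2}{2}\,\E[N_j \mid x_j = \mu]$.

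The one step that I expect to require care is combining these $n$ estimates without losing constants, and this is the heart of the matter. If one uses Pinsker \eqref{pinsker} in a single direction and sums, the bound involves $\sum_j \E[N_j \mid x_j = 0] = \tfrac{1}{1-k/n}\,\E\bigl[\sum_j N_j \, \mathbf{1}(x_j = 0)\bigr] \le \tfrac{m}{1-k/n}$, and the stray factor $1/(1-k/n)$ --- as large as $2$ when $k \approx n/2$ --- would degrade the rate to $\sqrt{m/(n-k)}$, which is not good enough for the downstream constants. The fix is to use Pinsker symmetrically: because $\|\bbP - \bbQ\|_{\mathrm{TV}}^2 \le \tfrac12 K(\cdot,\cdot)$ with the arguments in \emph{either} order, I am free to bound $\|\bbP^{(j)}_1 - \bbP^{(j)}_0\|_{\mathrm{TV}}^2$ by the $(1-\tfrac kn,\ \tfrac kn)$-weighted average of the two one-sided bounds, and those are exactly the weights for which $(1-\tfrac kn)\,\E[N_j \mid x_j = 0] + \tfrac kn\,\E[N_j \mid x_j = \mu] = \E[N_j]$. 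This yields the conditioning-free estimate $\|\bbP^{(j)}_1 - \bbP^{(j)}_0\|_{\mathrm{TV}}^2 \le \tfrac{\mu^2}{4}\,\E[N_j]$.

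Finally, by Cauchy--Schwarz and the power constraint $\sum_j N_j = \sum_i \|\ba_i\|_2^2 \le m$, which gives $\sum_j \E[N_j] \le m$,
\[
\sum_{j=1}^n \|\bbP^{(j)}_1 - \bbP^{(j)}_0\|_{\mathrm{TV}} \ \le\ \frac\mu2 \sum_{j=1}^n \sqrt{\E[N_j]} \ \le\ \frac\mu2 \sqrt{\,n \sum_{j=1}^n \E[N_j]\,} \ \le\ \frac\mu2 \sqrt{mn},
\]
and plugging this into the first display gives $\E|\Shat\Delta S| \ge k - \tfrac kn \cdot \tfrac\mu2 \sqrt{mn} = k\bigl(1 - \tfrac\mu2 \sqrt{m/n}\bigr)$, which is the claim. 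Everything except the symmetric use of Pinsker is routine bookkeeping.
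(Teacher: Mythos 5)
Your proposal is correct and follows essentially the same route as the paper: reduction to $n$ Bayes tests via \lemref{Bayes}, convexity of KL over the product prior to compute $K \le \tfrac{\mu^2}{2}\E[N_j \mid x_j]$, and the $(\pi_0,\pi_1)$-weighted combination of the two one-sided Pinsker bounds to eliminate the conditioning (the paper's inequality \eqref{XLi}), finishing with Cauchy--Schwarz and the power constraint $\sum_{i,j} a_{i,j}^2 \le m$. The only cosmetic difference is that you apply Cauchy--Schwarz to $\sum_j \sqrt{\E[N_j]}$ at the end rather than to $\sum_j \|\bbP_{1,j}-\bbP_{0,j}\|_{\mathrm{TV}}$ up front, which yields the identical bound.
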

Hence, if the amplitude of the signal is below $\sqrt{n/m}$, we expect a large number of errors; indeed, if $\mu = \sqrt{n/m}$, then
$\E |\Shat \Delta S| \ge k/2$.

\begin{proof}\footnote{The main ideas of our proof are
      similar to those in that of Assouad's Lemma, see
      \cite{MR777600,MR2724359} for instance.  Note, however, that our
      approach yields a sharper constant.}  Let $\pi_1 =
    k/n$ and $\pi_0 = 1 - \pi_1$. For any $j$, set $\bbP_{0,j} =
    \P(\cdot | x_j = 0)$ and $\bbP_{1,j} = \P(\cdot | x_j \neq
    0)$. Let $B_j$ denote the Bayes risk of the decision problem
    $H_{0,j} : x_j = 0$ versus $H_{1,j} : x_j = 1$.  From
    Lemma~\ref{lem:Bayes} we have that
$$
\E |\Shat \Delta S| = \sum_{j=1}^n \P(\Shat_j \neq S_j) \ge  \sum_{j=1}^n  B_j \ge \pi_1 \sum_{j=1}^n \Bigl(1-\|\bbP_{1,j} - \bbP_{0,j}\|_{\text{TV}}\Bigr).
$$
Applying the Cauchy-Schwartz inequality, we obtain
\begin{equation} \label{eq:intermediate_a}
\E |\Shat \Delta S| \ge k \Bigl(1- \frac{1}{\sqrt{n}} \sqrt{\sum_{j=1}^n \|\bbP_{1,j} - \bbP_{0,j}\|_{\text{TV}}^2}\Bigr).
\end{equation}
The theorem is a consequence of~\eqref{eq:intermediate_a} combined with
\begin{equation} \label{eq:intermediate}
  \sum_{j=1}^n \|\bbP_{1,j} -   \bbP_{0,j}\|_{\text{TV}}^2 \le \frac{\mu^2}{4} \, m.
\end{equation}

To establish~\eqref{eq:intermediate}, we apply Pinsker's inequality twice to obtain
\begin{equation}
\label{XLi}
\|\bbP_{1,j} - \bbP_{0,j}\|^2_{\text{TV}} \le \frac{\pi_0}{2}
K(\bbP_{0,j},\bbP_{1,j}) + \frac{\pi_1}{2} K(\bbP_{1,j},\bbP_{0,j})
\end{equation}
so that it remains to find an upper bound on the KL divergence between
$\bbP_{0,j}$ and $\bbP_{1,j}$. Write $\bbP_0 = \bbP_{0,j}$ for short
and likewise for $\bbP_{1,j}$. Then
\[
\P_0(y_{[m]}) = \sum_{\bx'} \P(\bx') \P(y_{[m]} | x_j = 0, \bx') :=
\sum_{\bx'} \P(\bx') \bbP_{0,\bx'},
\]
where $\bx' = (x_1, \ldots, x_{j-1}, x_{j+1}, \ldots, x_n)$ and
$\bbP_{0,\bx'}$ is the conditional probability
  distribution of $y_{[m]}$ given $\bx'$ and $x_j = 0$;
  $\P_1(y_{[m]})$ is defined similarly.  The convexity of the KL
divergence \eqref{eq:KLconvex} gives \beq \label{KL} K(\bbP_0,\bbP_1)
\le \sum_{\bx'} \P(\bx') K(\bbP_{0,\bx'}, \bbP_{1,\bx'}).  \eeq We now
calculate this divergence. In order to do this, observe that we have
$y_i = \<\ba_i,\bx\> + z_i = c_i + z_i$ under $\bbP_{0,\bx'}$ while
$y_i = a_{i,j} \mu + c_i + z_i$ under $\bbP_{1,\bx'}$. This yields
\begin{align*}
K(\bbP_{0,\bx'}, \bbP_{1,\bx'})
& = \E_{0,\bx'} \log \frac{\bbP_{0,\bx'}}{\bbP_{1,\bx'}}\\
&= \sum_{i=1}^m \E_{0,\bx'} \left(\frac12 (y_i  - \mu a_{i,j}  - c_i )^2 - \frac12 (y_i  - c_i )^2\right) \\
&= \sum_{i=1}^m \E_{0,\bx'} \left(- z_i \mu a_{i,j} + (\mu a_{i,j})^2/2\right)\\
&= \frac{\mu^2}2 \sum_{i=1}^m \E_{0,\bx'} (a_{i,j}^2).
\end{align*}
The first equality holds by definition, the second follows from
\eqref{Px}, the third from $y_i = c_i + z_i $ under $\bbP_{0,\bx'}$
and the last holds since $z_i$ is independent of $a_{i,j}$ and has
zero mean. Using \eqref{KL}, we obtain
\[
K(\bbP_{0},\bbP_{1}) \le \frac{\mu^2}2 \sum_{i=1}^m \E[ a_{i,j}^2 |
x_j = 0].
\]
Similarly,
\[
K(\bbP_{1},\bbP_{0}) \le \frac{\mu^2}2 \sum_{i=1}^m \E[ a_{i,j}^2 |
x_j = \mu]
\]
and, therefore, \eqref{XLi} shows that
\[
\|\bbP_{1,j} - \bbP_{0,j}\|^2_{\text{TV}} \le \frac{\mu^2}{4}
\Bigl(\sum_{i=1}^m \pi_0 \E[ a_{i,j}^2 | x_j = 0] + \pi_1 \E[
a_{i,j}^2 | x_j = \mu] \Bigr) = \frac{\mu^2}{4} \sum_{i=1}^m \E[
a_{i,j}^2].
\]
For any particular pair $(i,j)$ with $i>1$, we can say
  very little about $\E[ a_{i,j}^2]$ since it can depend on all the
  previous measurements in a potentially very complicated manner.
  However, by summing this inequality over $j$ we can
  obtain~\eqref{eq:intermediate} by using the only constraint we have
  imposed on the $\ba_i$, namely, $\|\ba_i\|_2 = 1$, so that
  $\sum_{ij} a^2_{ij} = m$.  This establishes the theorem.
\end{proof}

\subsubsection{Estimation in mean-squared error}
\label{sec:estimation}

It is now straightforward to obtain a lower bound on the MSE from
Theorem \ref{thm:bernoulli-support}.
\begin{proof}[Proof of Theorem~\ref{teo:main-minmax}]
  Let $S$ be the support of $\bx$ and set $\Shat := \{j: |\xhat_j|
  \geq \mu/2\}$.  We have
\[
\|\bxhat - \bx\|_2^2 = \sum_{j \in S} (\xhat_j - x_j)^2 + \sum_{j
  \notin S} \xhat_j^2 \geq \frac{\mu^2}{4} |S \setminus \Shat| +
\frac{\mu^2}{4} |\Shat \setminus S| = \frac{\mu^2}{4} |\Shat \Delta S|
\]
and, therefore,
\[
\E \|\bxhat - \bx\|_2^2 \geq \frac{\mu^2}{4} \E
|\Shat \Delta S| \geq \frac{\mu^2}{4} k \Bigl(1 - \frac{\mu}{2}
\sqrt{\frac{m}{n}}\Bigr),
\]
where the last inequality is from \thmref{bernoulli-support}.  We then
plug in $\mu = \frac{4}{3}\, \sqrt{\frac{n}{m}}$ and simplify to conclude.
\end{proof}

\subsection{The conditional Bernoulli prior and minimax bound}
\label{sec:minmax}

To establish Theorem~\ref{thm:second-minmax}, we choose as
distribution on $\bx$ the prior $\nu_{n,k}$ defined as follows: we
start with the Bernoulli prior $\pi_{n, \alpha k}$ \eqref{bernoulli}
with mean $\alpha k$ (instead of $k$) for some fixed $\alpha \in
(0,1)$, and then condition that distribution to realizations with at
most $k$ nonzero entries.

\begin{prp}
\label{prp:support2}
Suppose that $\bx$ is sampled according to $\nu_{n,k}$ with $k\le n/2$, then any estimate $\Shat$ obeys
\begin{equation}
\label{eq:supportH2}
\E |\Shat \Delta S| \geq \alpha k \Bigl(1-\gamma_{n,k}(\alpha)  -  \frac{\mu}2 \sqrt{\frac{m}{n}}\Bigr),
\end{equation}
where
\[
\gamma_{n,k}(\alpha) := \frac1\alpha \sum_{j = k+1}^n (2 + (j-1)/k)
\P(\Bin(n, \alpha k/n) = j).
\]
\end{prp}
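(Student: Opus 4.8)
The plan is to reduce the conditional prior $\nu_{n,k}$ to the unconditional Bernoulli prior $\pi := \pi_{n,\alpha k}$ and then invoke \thmref{bernoulli-support}. Fix an arbitrary (adaptive) sensing scheme; since the Bayes rule for Hamming loss minimizes $\E |\Shat \Delta S|$ over all estimators, it suffices to prove the stated bound for $\Shat_j := \mathbf{1}\{\nu_{n,k}(x_j = \mu \mid y_{[m]}) > 1/2\}$, and the bound for every other estimator then follows. Write $E = \{\|\bx\|_0 \le k\}$ for the conditioning event, so that $\nu_{n,k}(\cdot) = \pi(\cdot \mid E)$; using a subscript to indicate which prior governs $\bx$ (the conditional law of $\by$ given $\bx$ is the same in both cases),
\[
\E_{\nu_{n,k}} |\Shat \Delta S| = \frac{\E_\pi\bigl(|\Shat \Delta S|\, \mathbf{1}_E\bigr)}{\pi(E)} \;\ge\; \E_\pi |\Shat \Delta S| - \E_\pi\bigl(|\Shat \Delta S|\, \mathbf{1}_{E^c}\bigr),
\]
where the inequality uses $\pi(E) \le 1$ and $|\Shat \Delta S| \ge 0$.

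For the first term, \thmref{bernoulli-support} applied with mean $\alpha k$ in place of $k$ (legitimate since $\alpha k < n/2$, and since that theorem holds for \emph{every} sensing scheme, including one tailored to $\nu_{n,k}$) gives $\E_\pi |\Shat \Delta S| \ge \alpha k\,(1 - \tfrac{\mu}{2}\sqrt{m/n})$. For the second term, the key observation is that
\[
\sum_{j=1}^n \nu_{n,k}(x_j = \mu \mid y_{[m]}) = \E_{\nu_{n,k}}\bigl[\,\|\bx\|_0 \,\big|\, y_{[m]}\bigr] \le k,
\]
because $\|\bx\|_0 \le k$ almost surely under $\nu_{n,k}$; hence at most $2k-1$ of these posterior probabilities can exceed $1/2$, i.e.\ $|\Shat| \le 2k-1$ for every $y_{[m]}$ (and thus $\pi$-almost surely, the Gaussian noise making $y_{[m]}$ have full support under both priors). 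Therefore $|\Shat \Delta S| \le |\Shat| + \|\bx\|_0 \le 2k - 1 + \|\bx\|_0$ pointwise, and since $\|\bx\|_0 \sim \Bin(n, \alpha k/n)$ under $\pi$,
\[
\E_\pi\bigl(|\Shat \Delta S|\, \mathbf{1}_{E^c}\bigr) \le \sum_{j=k+1}^n (2k-1+j)\, \P(\Bin(n, \alpha k/n) = j) = \alpha k\, \gamma_{n,k}(\alpha).
\]
Combining the three displays yields \eqref{eq:supportH2}.

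I expect the substantive points to be the two small verifications flagged above: that $|\Shat| \le 2k-1$ holds \emph{pointwise} in $y_{[m]}$ rather than merely in expectation (so that it can be transferred to the measure $\pi$), and that substituting $\alpha k$ for $k$ in \thmref{bernoulli-support} is harmless. The conceptual move that keeps the proof short is to bound the conditional risk by the unconditional risk minus the ``overflow'' contribution on $\{\|\bx\|_0 > k\}$, rather than attempting to redo the total-variation estimate directly under $\nu_{n,k}$: once one conditions, the prior is no longer a product measure, and the pathwise identity $\sum_{i,j} a_{i,j}^2 = m$ that drives the proof of \thmref{bernoulli-support} is not available coordinate by coordinate.
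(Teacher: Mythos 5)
Your proof is correct and follows essentially the same route as the paper's: pass from $\nu_{n,k}$ to the unconditional Bernoulli prior $\pi_{n,\alpha k}$, invoke Theorem~\ref{thm:bernoulli-support} for the main term, and absorb the overflow on $\{\|\bx\|_0>k\}$ via the bound $|\Shat\Delta S|\le 2k-1+\|\bx\|_0$. The only (valid) variation is how you secure $|\Shat|\le 2k-1$: the paper truncates an arbitrary estimator by replacing any $\Shat$ with $|\Shat|\ge 2k$ by $\emptyset$ without increasing the $\nu_{n,k}$-risk, whereas you reduce to the coordinatewise Bayes rule, for which the cardinality bound holds pointwise because the posterior probabilities sum to $\E_{\nu_{n,k}}[\|\bx\|_0\mid y_{[m]}]\le k$.
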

\begin{proof}
  We begin by arguing that we can restrict attention to estimates
  $\Shat$ with cardinality at most $2k-1$. To see why, consider an
  arbitrary estimate $\Shat$ and set
\[
\Shat_k = \begin{cases}
\Shat, & |\Shat| \le 2 k -1, \\
\emptyset, & |\Shat| \ge 2 k.
\end{cases}
\]
Now if $|\Shat| \ge 2 k$, then for any $S$ with $|S| \le k$, we have
\[
|\Shat \Delta S| \ge |\Shat \setminus S| \ge |\Shat| - |S| \ge k \ge
|\emptyset \Delta S|.
\]
Since $|S| \le k$ under $\nu_{n,k}$, it follows that $\E |\Shat \Delta
S| \ge \E |\Shat_k \Delta S|$, which proves the claim. From now on, we
assume that $|\Shat| < 2k$.

Set $\pi_{n, \alpha k}(k) = \P_{\bx \sim \pi_{n,\alpha k}}(|S| \le
k)$ and observe the identity
\[
\E_{\bx \sim \nu_{n,k}} |\Shat \Delta S| = \E_{\bx \sim \pi_{n,\alpha k}}
\left[ |\Shat \Delta S| \, \vert \, |S| \le k \right] =
\frac{1}{\pi_{n, \alpha k}(k)} \, \E_{\bx \sim \pi_{n,\alpha k}}
\left[ |\Shat \Delta S| \, {\bf 1}_{\{|S| \le k\}} \right].
\]
To conclude, \thmref{bernoulli-support} together with $|\Shat \Delta
S| \le |\Shat| + |S| \le 2k-1 + |S|$ give
\begin{align*}
  \pi_{n, \alpha k}(k) \, \E_{\bx \sim \nu_{n,k}} |\Shat \Delta S| &
  = \E_{\bx \sim \pi_{n,\alpha k}} |\Shat \Delta S| - \E_{\bx \sim \pi_{n,\alpha k}} |\Shat \Delta S| \ {\bf 1}_{\{|S| \ge k+1\}}  \\
  & \ge \alpha k \left(1 - \frac\mu2 \sqrt{\frac{m}n}\right) - \sum_{j
    = k+1}^n (2k -1 + j) \P(\Bin(n, \alpha k/n) = j).
\end{align*}
\end{proof}

We do as in \secref{estimation} to conclude the proof of
Theorem~\ref{thm:second-minmax}.  Let $\gamma$ be a short for
$\gamma_{n,k}(\alpha)$.  We find that the optimal choice is $\mu =
\frac43 (1 - \gamma) \sqrt{n/m}$, yielding the lower bound
\beq \label{MSE1} \E \|\bxhat - \bx\|_2^2 \ge \alpha (1 - \gamma)
\frac{4 k}{27}\sqrt{n/m}.  \eeq
To obtain a bound on $\gamma$, note that we can write
\[
\alpha k \, \gamma_{n,k}(\alpha) = 3k \P(\Bin(n, \alpha k/n) \ge k+1) + \sum_{j \ge k+2} \P(\Bin(n, \alpha k/n) \ge j).
\]
Bennett's inequality applied to the binomial distribution, gives
\beq \label{bennett}
\P(\Bin(m, p) \ge j) \le \exp\big[- j \log(j/(mp)) +j - mp \big].
\eeq
Therefore, if $j \ge k$, we have
\[
\P(\Bin(n, \alpha k/n) \ge j) \le \exp\big[- j \log(j/(\alpha k)) +j - \alpha k \big] \le \exp[- \beta j], \quad \beta := \alpha - 1 - \log \alpha,
\]
where the last inequality follows from the fact that the exponent is increasing in $k$ over the range $(0, j/\alpha)$.  Note that $\beta > 0$ for any $\alpha < 1$.  Applying this inequality, we get
\beq \label{gamma}
\alpha k \, \gamma_{n,k}(\alpha) \le 3 k e^{- (k+1) \beta} + \frac{e^{- (k+2) \beta}}{1 - e^{-\beta}} \le (3 k+1) e^{- (k+1) \beta},
\eeq
when $\beta \ge \log 2$.  This bound yields $\gamma < 1$
for all $k \ge 1$ when $\alpha \le 0.03$, in which case \eqref{eq:supportH2} and \eqref{MSE1} become meaningful.
This bound is quite conservative, however.  Using the definition of $\gamma$, we can numerically show that that by choosing $\alpha$ appropriately we can obtain $\alpha (1
- \gamma) \ge 2 e^{-1/2}-1 \ge 0.21$ for any $n \ge 2$ and all $k \le n/2$.  Thus we can always write $\alpha (1 - \gamma) \frac4{27} \ge \frac1{33}$.  While setting $C_k = \frac1{33}$ ensures that Theorem~\ref{thm:second-minmax} holds for any possible choice of $k$, it is somewhat pessimistic in the sense that it is entirely dictated by the special case of $k=1$ (which could be handled more efficiently by alternative means~\cite{adaptiveISIT}).  For larger values of $k$, it is possible to obtain an improved constant.  For example, when $k \ge 10$ numerical calculations show that we can take $C_k = \frac1{15}$.  Moreover, in view of the first inequality in~\eqref{gamma}, and the fact that $\beta > 0$ for all $\alpha < 1$, we have $\gamma_{n,k}(\alpha) \to 0$ as $k \to \infty$ and $\alpha$ is held fixed.  Thus, for $\alpha$ sufficiently close to $1$ we will have that $\alpha(1-\gamma) \frac4{27} \ge \frac17$ for $k$ sufficiently large.  We have also verified this numerically.  Hence, the numerical constant $\frac17$ of Theorem~\ref{teo:main-minmax} is also valid in
Theorem~\ref{thm:second-minmax} provided $k$ is sufficiently large.

\section{Numerical Experiments}
\label{sec:numerics}

In order to briefly illustrate the implications of the lower bounds
in~\secref{main} and the potential limitations and benefits of
adaptivity in general, we include a few simple numerical
experiments. To simplify our discussion, we limit ourselves to
existing adaptive procedures that aim at consistent support recovery:
the adaptive procedure from~\cite{4518814} and the recursive bisection
algorithm of~\cite{iwen}.

We emphasize that in the case of a generic $k$-sparse signal, there
are many possibilities for adaptively estimating the support of the
signal.  For example, the approach in~\cite{haupt-compressive}
iteratively rules out indices and could, in principle, proceed until
only $k$ candidate indices remain.  In contrast, the approaches
in~\cite{4518814} and~\cite{iwen} are built upon algorithms for
estimating the support of $1$-sparse signals.  An algorithm for a
$1$-sparse signal could then be run $k$ times to estimate a $k$-sparse
signal as in~\cite{4518814}, or used in conjunction with a hashing
scheme as in~\cite{iwen}.  Since our goal is not to provide a thorough
evaluation of the merits of all the different possibilities, but
merely to illustrate the general limits of adaptivity, we simplify our
discussion and focus exclusively on the simple case of one-sparse
signals, i.e., where $k=1$.

Specifically, in our experiments we will consider the uniform prior on
the set of vectors with a single nonzero entry equal to $\mu > 0$ as
in \secref{main}.  Since we are focusing only on the case of $k=1$,
the algorithms in~\cite{4518814} and~\cite{iwen} are extremely simple
and are shown in \algref{1} and \algref{2} respectively. Note that in
\algref{1} the step of updating the posterior distribution $\bp$
consists of an iterative update rule given in~\cite{4518814} and does
not require any a priori knowledge of the signal $\bx$ or $\mu$.  In
\algref{2}, we simplify the recursive bisection algorithm
of~\cite{iwen} using the knowledge that $\mu > 0$, which allows us to
eliminate the second stage of the algorithm aimed at detecting
negative coefficients.  Note that this algorithm proceeds through
$s_{\mathrm{max}} = \log_2 n$ stages and we must allocate a certain
number of measurements to each stage.  In our experiments we set $m_s
= \lceil \beta 2^{-s} \rceil$, where $\beta$ is selected to ensure
that $\sum_{s=1}^{\log_2 n} m_s \le m$.

\begin{algorithm}[t]
\caption{Adaptive algorithm from~\cite{4518814}} \label{alg:1}
\begin{algorithmic}
\STATE \textbf{input:} $m \times n$ random matrix $\bB$ with i.i.d.\ Rademacher ($\pm 1$ with equal probability) entries.
\STATE \textbf{initialize:} $\bp = \frac{1}{n}(1, \ldots, 1)^T$.
\FOR{$i=1$ to $i = m$}
\STATE Compute $\ba_i  = (b_{i,1} \sqrt{p_1}, \ldots, b_{i,n} \sqrt{p_n})^T$.
\STATE Observe $y_i = \inner{\ba_i}{\bx} + z_i$.
\STATE Update posterior distribution $\bp$ of $\bx$ given $(\ba_1,y_1), \ldots, (\ba_i,y_i)$ using the rule in~\cite{4518814}.
\ENDFOR
\STATE \textbf{output:} Estimate for $\mathrm{support}(\bx)$ is the index where $\bp$ attains its maximum value.
\end{algorithmic}
\end{algorithm}

\begin{algorithm}[t]
\caption{Recursive bisection algorithm of~\cite{iwen}} \label{alg:2}
\begin{algorithmic}
\STATE \textbf{input:} $m_1$, \ldots, $m_{s_{\mathrm{max}}}$.
\STATE \textbf{initialize:} $J_1^{(1)} = \{1, \ldots, \frac{n}{2}\}$, $J_2^{(1)} = \{ \frac{n}{2}+1, \ldots, n\}$.
\FOR{$s=1$ to $s = s_{\mathrm{max}}$}
\STATE Construct the $m_s \times n$ matrix $\bA^{(s)}$ with rows $|J_1^{(s)}|^{-\frac12}{\bf 1}_{J_1^{(s)}} - |J_2^{(s)}|^{-\frac12}{\bf 1}_{J_2^{(s)}}$.
\STATE Observe $\by^{(s)} = \bA^{(s)} \bx + \bz^{(s)}$.
\STATE Compute $w^{(s)} = \sum_{i=1}^{m_s} y_i^{(s)}$.
\STATE Subdivide: Update $J_1^{(s+1)}$ and $J_2^{(s+1)}$ by partitioning $J_1^{(s)}$ if $w^{(s)} \ge 0$ or $J_2^{(s)}$ if $w^{(s)} < 0$.
\ENDFOR
\STATE \textbf{output:} Estimate for $\mathrm{support}(\bx)$ is $J_1^{(s_{\mathrm{max}})}$ if $w^{(s_{\mathrm{max}})} \ge 0$, $J_2^{(s_{\mathrm{max}})}$ if $w^{(s_{\mathrm{max}})} < 0$.
\end{algorithmic}
\end{algorithm}

\subsection{Evolution of the posterior}

\begin{figure}[t]
   \centering
   \begin{tabular}{ccc}
   \hspace{-3mm} \includegraphics[width=.33\linewidth]{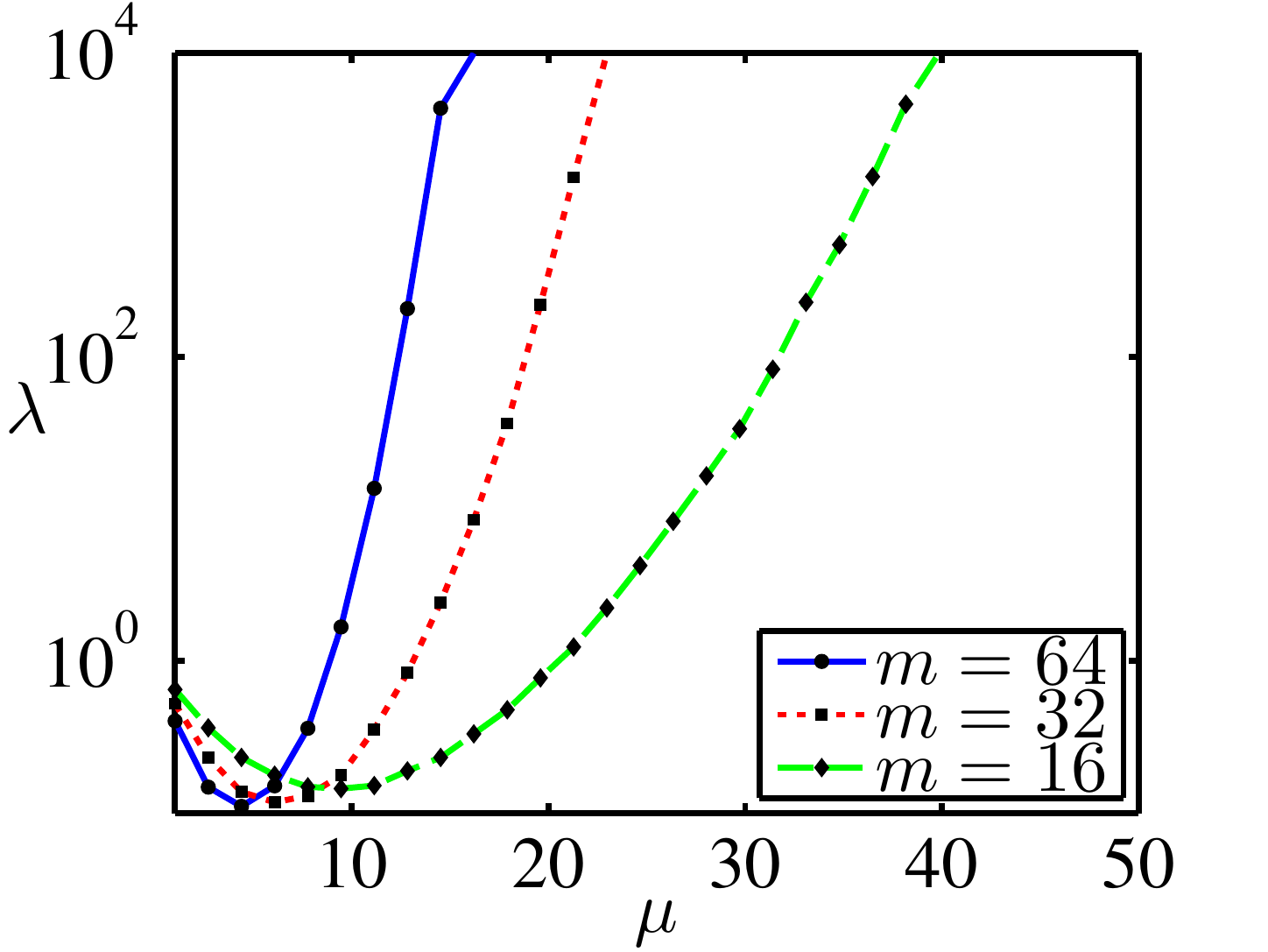} & \hspace{-3mm} \includegraphics[width=.33\linewidth]{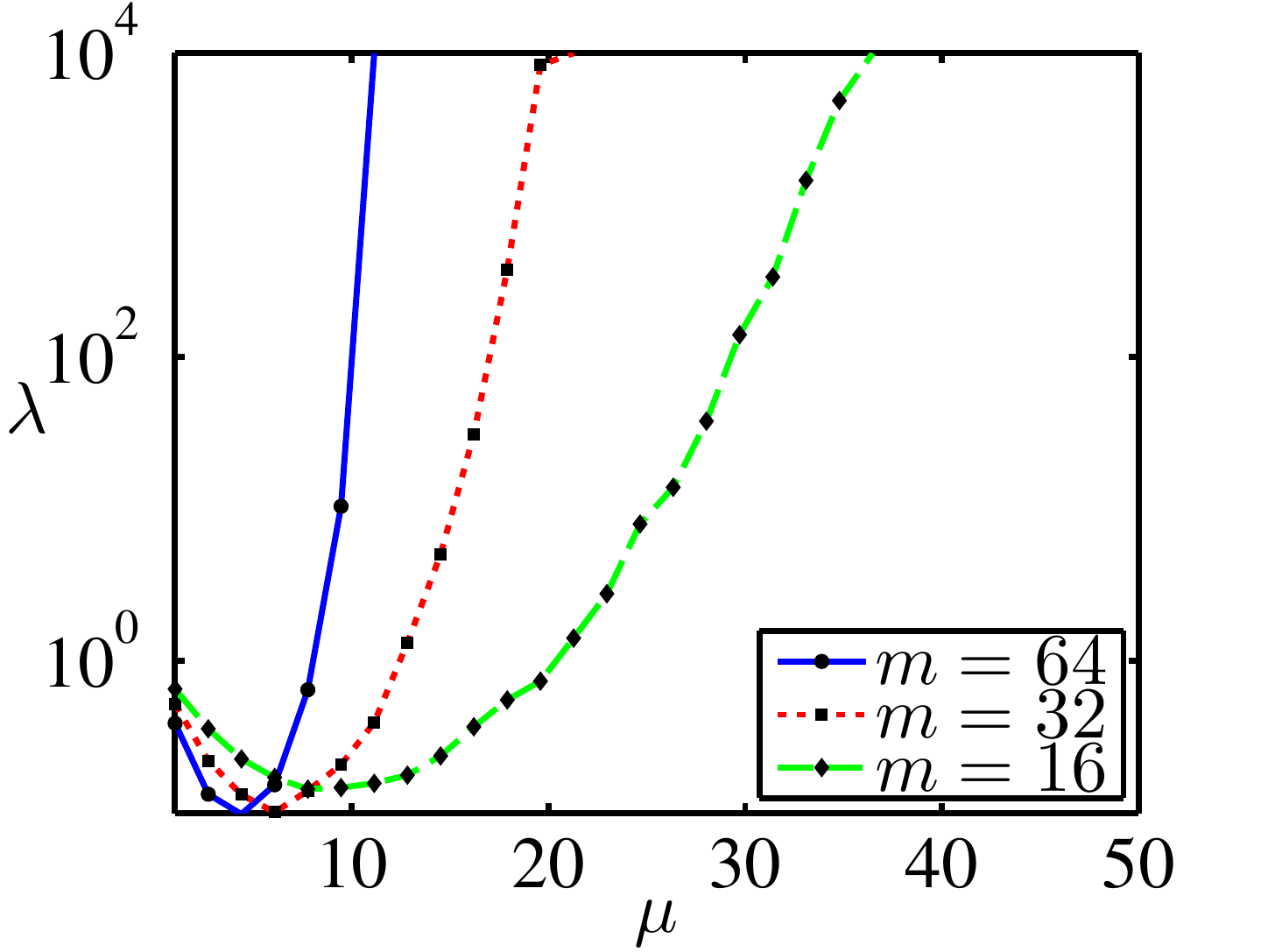} & \hspace{-3mm} \includegraphics[width=.33\linewidth]{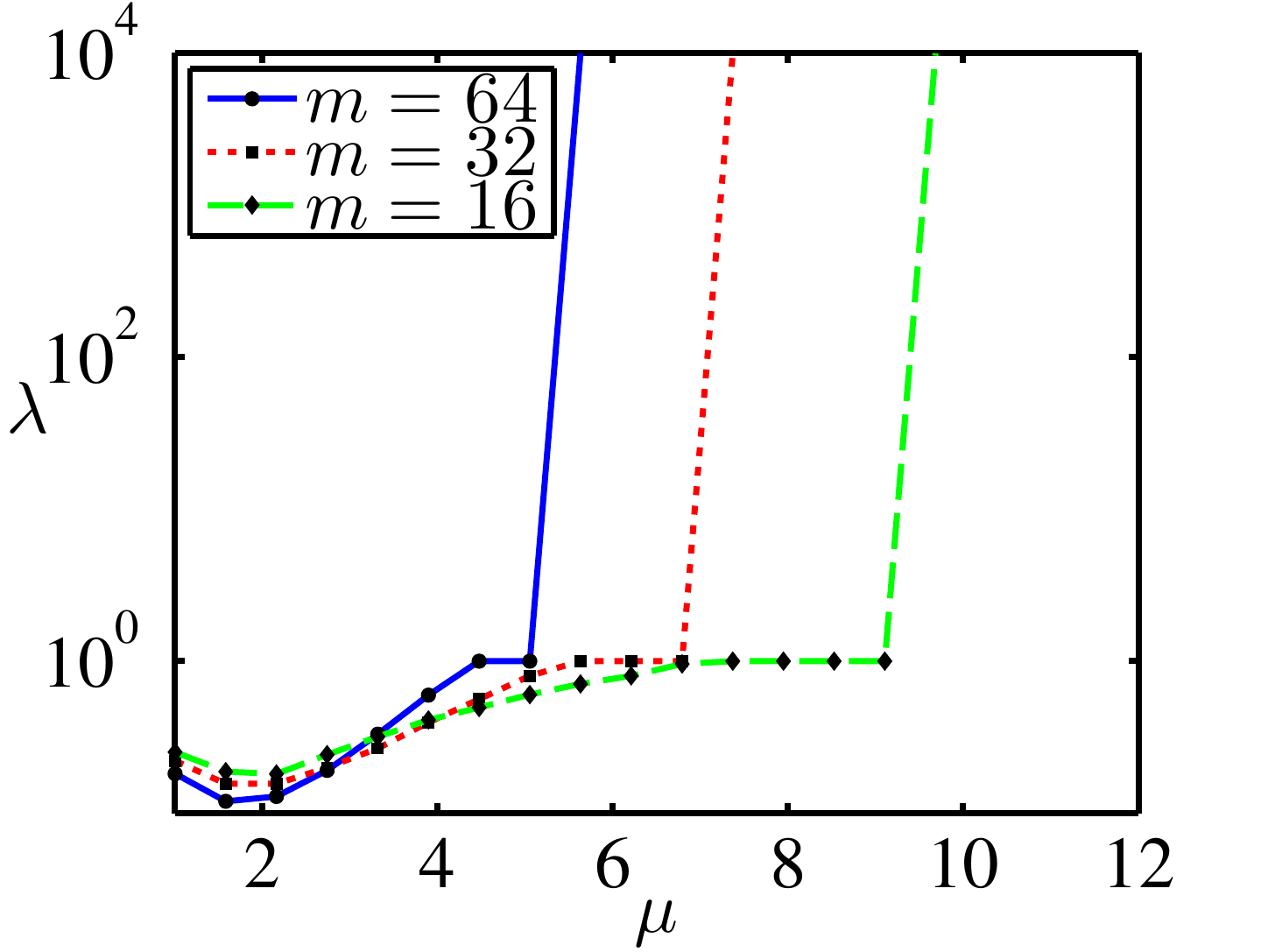} \\
   \hspace{-2mm} {\small \sl (a)} &  \hspace{-2mm} {\small \sl (b)} & \hspace{-2mm} {\small \sl  (c)}
   \end{tabular}
   \caption{\small \sl Behavior of the posterior distribution as a function of $\mu$ for several values of $m$. (a) shows the results for nonadaptive measurements.  (b) shows the results for~\algref{1}.  (c) shows the results for~\algref{2}.  We see that~\algref{2} is able to detect somewhat weaker signals than~\algref{1}.  However, for both cases we observe that once $\mu$ exceeds a certain threshold proportional to $\sqrt{n/m}$, the ratio $\lambda$ of $p_{j^*}$ to the second largest posterior probability grows exponentially fast, but that this does not differ substantially from the behavior observed in (a) when using nonadaptive measurements.
     \label{fig:figure1}}
\end{figure}

We begin by showing the results of a simple simulation that
illustrates the behavior of the posterior distribution of $\bx$ as a function of $\mu$ for
both adaptive schemes.  Specifically, we assume that $m$ is fixed and collect $m$ measurements using each approach.  Given the measurements $\by$, we then compute the posterior distribution $\bp$ using the true prior used to generate the signal, which can be computed using the fact that
\begin{equation} \label{eq:postCompute}
p_j \propto \exp \left( - \frac{1}{2\sigma^2} \| \by - \mu \bA \be_j \|_2^2 \right),
\end{equation}
where $\sigma^2$ is the noise variance and $\be_j$ denotes the $j$th element of the standard basis.
What we expect is that once $\mu$ exceeds a certain threshold (which depends
on $m$), the posterior will become highly concentrated on the true
support of $\bx$.  To quantify this, we consider the case where $j^*$
denotes the true location of the nonzero element of $\bx$ and define
\[
\lambda  =  \frac{p_{j^*}}{ \max_{j \neq j^*} p_j }.
\]
Note that when $\lambda \le 1$, we cannot reliably detect the nonzero,
but when $\lambda \gg 1$ we can.

In~\figref{figure1} we show the results for a few representative values of $m$ (a) when using nonadaptive measurements, i.e., a (normalized) i.i.d.\ Rademacher random matrix $\bA$, compared to the results of (b) \algref{1}, and (c) \algref{2}.  For each value of $m$ and for each
value of $\mu$, we acquire $m$ measurements using each approach and compute
the posterior $\bp$ according to~\eqref{eq:postCompute}.  We then compute the value of
$\lambda$.  We repeat this for 10,000 iterations and plot the median
value of $\lambda$ for each value of $\mu$ for all three approaches.
In our experiments we set $n = 512$ and $\sigma^2 = 1$.  We truncate
the vertical axis at $10^4$ to ensure that all curves are comparable.
We observe that in each case, once $\mu$ exceeds a certain threshold
proportional to $\sqrt{n/m}$, the ratio $\lambda$ of
$p_{j^*}$ to the second largest posterior probability grows
exponentially fast.  As expected, this occurs for both the nonadaptive and adaptive strategies, with no substantial difference in terms of how large $\mu$ must be before support recovery is assured (although~\algref{2} seems to improve upon the nonadaptive strategy by a small constant).

\subsection{MSE performance}

\begin{figure}[t]
   \centering
   \includegraphics[width=.5\linewidth]{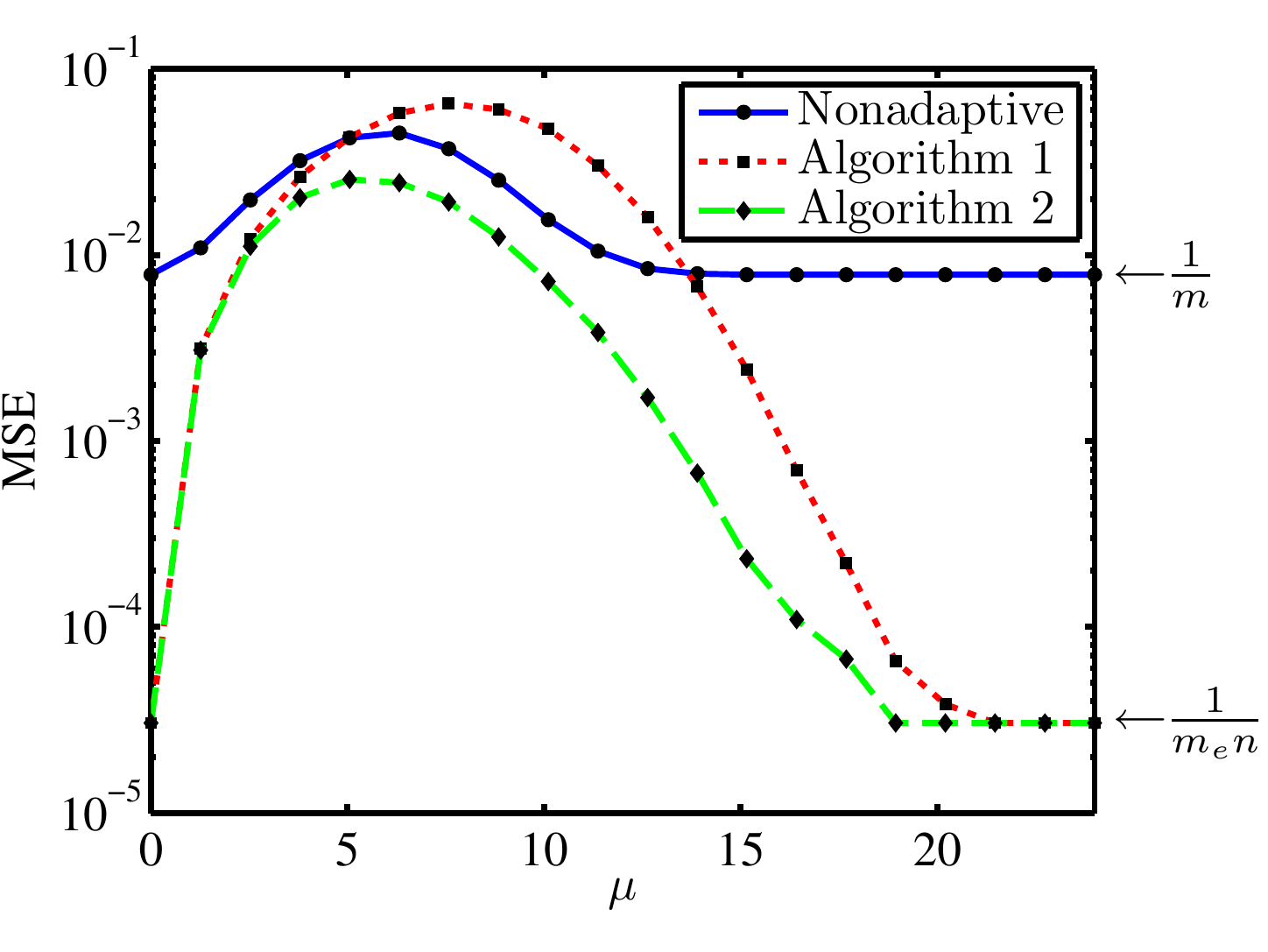}
   \caption{\small \sl The performance of \algref{1} and \algref{2} in
     the context of a two-stage procedure that first uses $m_d =
     \frac{m}{2}$ adaptive measurements to detect the location of the
     nonzero and then uses $m_e = \frac{m}{2}$ measurements to
     directly estimate the value of the identified coefficient.  We
     show the resulting MSE as a function of the amplitude $\mu$ of
     the nonzero entry, and compare this to a nonadaptive procedure
     which uses a (normalized) i.i.d.\ Rademacher matrix followed by
     OMP.  In the worst case, the MSE of the adaptive algorithms is
     comparable to the MSE obtained by the nonadaptive algorithm and
     exceeds the lower bound in Theorem~\ref{thm:second-minmax} by
     only a small constant factor.  When $\mu$ begins to exceed this
     critical threshold, the MSE of the adaptive algorithms rapidly
     decays below that of the nonadaptive algorithm and approaches
     $\frac{1}{m_e n}$, which is the MSE one would obtain given $m_e$
     measurements and a priori knowledge of the support.
     \label{fig:figure2}}
\end{figure}

We have just observed that for a given number of measurements $m$,
there is a critical value of $\mu$ below which we cannot reliably
detect the support.  In this section we examine the impact of this
phenomenon on the resulting MSE of a two-stage procedure that first
uses $m_d = pm$ adaptive measurements to detect the location of the
nonzero with either~\algref{1} or~\algref{2} and then reserves $m_e =
(1-p)m$ measurements to directly estimate the value of the identified
coefficient.  It is not hard to show that if we correctly identify the
location of the nonzero, then this will result in an MSE of $(m_e
n)^{-1} = ((1-p)mn)^{-1}$.  As a point of comparison, if an oracle
provided us with the location of the nonzero a priori, we could devote
all $m$ measurements to estimating its value, with the best possible
MSE being $\frac{1}{m n}$.  Thus, if we can correctly detect the
nonzero, this procedure will perform within a constant factor of the
oracle.

We illustrate the performance of~\algref{1} and~\algref{2} in terms of
the resulting MSE as a function of the amplitude $\mu$ of the nonzero
in~\figref{figure2}.  In this experiment we set $n = 512$ and $m =
128$ with $p = \frac12$ so that $m_d = 64$ and $m_e = 64$.  We then
compute the average MSE over 100,000 iterations for each value of
$\mu$ and for both algorithms.  We compare this to a nonadaptive
procedure which uses a (normalized) i.i.d.\ Rademacher matrix followed
by orthogonal matching pursuit (OMP).  Note that in the worst case the
MSE of the adaptive algorithms is comparable to the MSE obtained by
the nonadaptive algorithm and exceeds the lower bound in
Theorem~\ref{thm:second-minmax} by only a small constant factor.
However, when $\mu$ begins to exceed a critical threshold, the MSE
rapidly decays and approaches the optimal value of $\frac{1}{m_e n}$.
Note that when $\mu$ is large we can take $m_e \rightarrow m$ and
hence can actually get arbitrarily close to $\frac{1}{m n}$ in the
asymptotic regime.

\section{Discussion}
\label{sec:discussion}

The contribution of this paper is to show that if one has the freedom
to choose any adaptive sensing strategy and any estimation procedure
no matter how complicated or computationally intractable, we would not
be able to universally improve over a simple nonadaptive strategy that
simply projects the signal onto a lower dimensional space and perform
recovery via $\ell_1$ minimization.  This ``negative'' result should
not conceal the fact that adaptivity may help tremendously if the SNR
is sufficiently large, as illustrated in Section~\ref{sec:numerics}.
Hence, we regard the design and analysis of effective adaptive schemes
as a subject of important future research.  At the methodological
level, it seems important to develop adaptive strategies and
algorithms for support estimation that are as accurate and as robust
as possible. Further, a transition towards practical applications
would need to involve engineering hardware that can effectively
implement this sort of feedback, an issue which poses all kinds of
very concrete challenges. Finally, at the theoretical level, it would
be of interest to analyze the phase transition phenomenon we expect to
occur in simple Bayesian signal models. For instance, a central
question would be how many measurements are required to transition
from a nearly flat posterior to one mostly concentrated on the true
support.

In closing, we note that after the submission of this paper, a variant
of Algorithm~\ref{alg:2} was shown to recover the correct support of a
$1$-sparse vector with high probability provided that the amplitude
$\mu$ of the nonzero entry obeys $\mu \ge C \sqrt{n/m}$ for some
positive numerical constant $C$ \cite{adaptiveISIT,malloynowak}.  This
implies that for $k=1$, the lower bound in
Theorem~\ref{thm:second-minmax} is tight up to constant factors.
Thus, adaptive methods have the potential to remove the $\log(n/k)$
factor required in the nonadaptive setting.

\small

\subsection*{Acknowledgements}
The authors would like to thank the reviewers as well as Rui Castro,
Jarvis Haupt, and Alexander Tsybakov for their insightful feedback.
They are grateful to Xiaodong Li for suggesting an improvement in the
proof of Theorem~\ref{thm:bernoulli-support} and to Adam Bull for
pointing out a technical error.  E.~A-C.~is partially supported by
ONR grant N00014-09-1-0258. E.~C.~is partially supported by NSF via
grant CCF-0963835 and the 2006 Waterman Award, by AFOSR under grant
FA9550-09-1-0643 and by ONR under grant N00014-09-1-0258. M.~D.~is
supported by NSF grant DMS-1004718.

\bibliographystyle{abbrv}
\bibliography{adaptiveCS}

\end{document}